\newtheorem*{theorem}{Theorem}
\newtheorem{lemma}{Lemma}
\newcommand{\bd}[1]{\mathbf{#1}}
\newcommand{\del}{\delta}
\newcommand{\pa}{\partial}
\newcommand{\beq}{\begin{equation}}
\newcommand{\eeq}{\end{equation}}
\DeclareMathOperator\erf{erf}
\DeclareMathOperator\erfc{erfc}
\definecolor{red}{rgb}{1.0,0.0,0.0}
\title{The adjoint double layer potential on smooth surfaces in $\mathbb{R}^3$ and the Neumann problem}
\author{J. Thomas Beale\thanks{Department of Mathematics, Duke University, Durham, NC, 27708 USA beale@math.duke.edu} \and Michael Storm\thanks{Department of Mathematics, Farmingdale State College, SUNY, Farmingdale, NY 11735, USA storm@farmingdale.edu} \and Svetlana Tlupova\thanks{Department of Mathematics, Farmingdale State College, SUNY, Farmingdale, NY 11735, USA tlupovs@farmingdale.edu}}
\date{\today}
\begin{document}

\maketitle

\begin{abstract} 
We present a simple yet accurate method to compute the adjoint double layer potential, which is used to solve the Neumann boundary value problem for Laplace's equation in three dimensions.  An expansion in curvilinear coordinates leads us to modify the expression for the adjoint double layer so that the singularity is reduced when evaluating the integral on the surface.  We then regularize the Green's function, with a radial parameter $\delta$. We show that a natural regularization has error $O(\delta^3)$, and a simple modification improves the error to $O(\delta^5)$.  The integral is evaluated numerically without the need of special coordinates. We use this treatment of the adjoint double layer to solve the classical integral equation for the interior Neumann problem and evaluate the solution on the boundary. Choosing $\del = ch^{4/5}$, we find 
about $O(h^4)$ convergence in our examples, where $h$ is the spacing in a background grid.
\end{abstract}

{\bf Keywords:} boundary integral method, singular integral, layer potential, adjoint double layer, Laplace equation, Neumann boundary condition.\\

{\bf Mathematics Subject Classifications:} 65R20, 65D30, 31B10, 35J25
 
%%%%%%%%%%%%%%%%%%%%%%%%%%%%%%%%%%%%%%%%%
%%%%%%%%%%%%%%%%%%%%%%%%%%%%%%%%%%%%%%%%%

\section{Introduction}
We consider the Neumann boundary value problem for Laplace's equation, 
\begin{equation}\label{naminn}
    \Delta u=0 \text{ on } \Omega, \quad \frac{\partial u}{\partial n} = g \text{ on } S,
\end{equation}
which finds applications in a wide range of scientific and engineering problems. Here $\Omega$ is a bounded domain in $\mathbb{R}^3$ with a smooth boundary $S$, and $g$ is a given function on $S$
with integral zero. A common approach to solve the Neumann problem~\eqref{naminn} is to use layer potentials, reducing the problem to solving an integral equation on the surface~\cite{kress-99, folland-95}. The solution $u$ is represented by a single layer potential,
\begin{equation}\label{sl}
    u(\textbf{y}) = \int_{S} G(\textbf{x} - \textbf{y}) \, f(\textbf{x}) \, dS(\textbf{x}),
\end{equation}
where $G$ is the Green's function, ${G(\textbf{x})=\frac{-1}{4\pi |\textbf{x}|}}$, and $f$ is a density function determined by solving the integral equation of the second kind,
\begin{equation}\label{huu}
    -\frac{1}{2} f(\textbf{y}) +\int_{S} \frac{\partial }{\partial n( \textbf{y})}G( \textbf{x}- \textbf{y})\, f(\textbf{x}) \, dS( \textbf{x}) = g(\textbf{y}),\quad  \textbf{y}\in S.
\end{equation}
The integral in~\eqref{huu} is called the adjoint double layer potential,
\begin{equation}\label{adjdl}
   v(\textbf{y})= \int_{S} \frac{\partial }{\partial n( \textbf{y})}G( \textbf{x}- \textbf{y})f(\textbf{x})\, dS( \textbf{x}),
\end{equation}
where $\textbf{n}$ is the outward unit normal at $\textbf{y}$ on the surface. The normal derivative of the Green's function is
\begin{equation}\label{4}
    \frac{\partial }{\partial n( \textbf{y})}G( \textbf{x}- \textbf{y})=\textbf{n}(\textbf{y})\cdot \nabla_{\textbf{y}} G(\textbf{x}-\textbf{y})=\frac{\textbf{n}(\textbf{y})\cdot (\textbf{y}- \textbf{x})}{4\pi |\textbf{x}-\textbf{y}|^3}.
\end{equation}
Thus, the integrand in~\eqref{adjdl} is singular when ${\textbf{y} = \textbf{x}}$. Once the integral equation~\eqref{huu} is solved for the density function, the solution~\eqref{sl} can be evaluated at points on the surface, where the integrand is again singular, or at points off the surface. When the evaluation point is very near the surface, the accurate numerical integration of the near singularity is also a challenge since the integrand will have large derivatives; e.g., see \cite{beale-ying-wilson-16}. Our aim in this paper is to analyze the integrand of the adjoint double layer~\eqref{adjdl} and modify it to reduce the singularity, then solve the integral equation in~\eqref{huu} with high accuracy.

The area of numerical integration of singular and nearly singular integrals is extensive, and the most widely used numerical technique is the boundary element method~\cite{atkinson-97, kress-99, hsiao-wendland-08, sauter-schwab-10}. Some approaches are based on singularity subtraction methods~\cite{helsing-13,perez}, direct (Nystr\"{o}m) quadrature based on partitions of unity and coordinate transformations~\cite{bruno-kunyansky-01, ying-biros-zorin-06}, auxiliary nodes on adaptive triangular discretizations~\cite{bremer-gimbutas-12}, quadrature by expansion~\cite{klockner-barnett-greengard-oneil-13, siegel-tornberg-18}, and regularization of the kernel~\cite{beale-04, beale-ying-wilson-16}. When the domain has corners, the solution may exhibit singularities, which presents additional difficulties, see~\cite{hoskins-rachh-20} for some recent work on the Neumann problem on polygonal domains. 

Singularity subtraction methods, such as in Helsing~\cite{helsing-13}, are based on splitting the integral operator into the singular part, which is evaluated analytically using product integration, and the regular part where a straightforward numerical quadrature can be used. Bruno and Kunyansky~\cite{bruno-kunyansky-01} developed a high-order integrator for surface scattering problems based on partitions of unity to split the integral into the adjacent (near) interactions, where the singularity is resolved analytically by a change to polar coordinates, and the remainder where FFT's are used. This method was extended in Ying, Biros and Zorin~\cite{ying-biros-zorin-06} for 3D elliptic problems on multiply-connected domains. In Bremer and Gimbutas~\cite{bremer-gimbutas-12}, weakly singular integrals are evaluated efficiently using triangular discretizations.  A quadrature by expansion (QBX) method was developed by Kl\"{o}ckner et al.~\cite{klockner-barnett-greengard-oneil-13} for evaluation of Laplace and Helmholtz potentials, through local expansions centered at points very close to the surface. This method can achieve exponential accuracy but requires upsampling the density on a finer grid. A target-specific QBX method was developed by Siegel and Tornberg~\cite{siegel-tornberg-18}, where the same accuracy is achieved using fewer terms.  In recent work~\cite{wu-mart} special weights are used on a regular grid near the singularity for toroidal surfaces, and in~\cite{izzorunborgtsai} the integrand is extended to points beyond the surface.

This paper presents a simple method for computing the adjoint double layer potential~\eqref{adjdl} accurately in the singular case, without requiring  special quadrature or detailed surface representations. First, we apply a singularity modification that results in an integrand in~\eqref{adjdl} that is bounded but not smooth, making it more amenable to straightforward numerical integration. This modification is similar to the singularity subtraction that is very well known for the double layer potential but to the best of our knowledge, has only been briefly noted for the adjoint in~\cite{zinchenko-rother-davis-97} and~\cite{pozrikidis-01}. We 
demonstrate the effect of the singularity modification using Taylor expansions of the position and the normal vector in the curvilinear coordinates of the surface. With the bounded integrand, the kernel is then regularized following the method developed by Beale and collaborators~\cite{beale-04, beale-ying-wilson-16}. We use a special regularization for high accuracy, so that the resulting error is $O(\del^5)$, where $\del$ is the regularization parameter. We demonstrate this high accuracy using several numerical examples to evaluate the integral in~\eqref{adjdl}, solve the integral equation~\eqref{huu}, and
then find the solution of the Neumann problem on the surface \eqref{sl} using the method of
\cite{beale-ying-wilson-16}. 
We use a simple direct quadrature developed in~\cite{beale-ying-wilson-16, wilson-10} which does not require coordinate charts or triangulations on the surface, and is high order accurate for smooth surfaces and integrands.  In the present case, with mesh spacing $h$, we choose $\delta = ch^q$ with $q<1$ and find accuracy about $O(h^4)$ for $q = 4/5$.  This approach has been shown to be effective for single and double layer integrals in the context of Laplace and Stokes equations~\cite{beale-ying-wilson-16, tlupova-beale-19}. 

The rest of the paper is organized as follows. In Section~\ref{sec: Preliminaries}, we review the Taylor expansions for the analysis of the kernel singularities and the reduction of the singularity in the adjoint double layer in Section~\ref{sec: Sing_mod}. In Section~\ref{sec: Regularization}, we describe the method of regularization, estimate the resulting error, and present the improvement with higher accuracy.  We briefly discuss the quadrature rule and then the discrete form of the integral equation. In Section~\ref{sec: Numerical Results}, we illustrate the method with several numerical examples.

%%%%%%%%%%%%%%%%%%%%%%%%%%%%%%%%%%%%%%%%%
%%%%%%%%%%%%%%%%%%%%%%%%%%%%%%%%%%%%%%%%%

\section{Preliminaries}
\label{sec: Preliminaries}

In this section, we discuss the surface parametrization and
Taylor expansions that will be used in our analysis of the kernel singularity. For simplicity, we assume ${\textbf{y}=\textbf{0}}$, and use the parametric representation 
\begin{equation}
     \textbf{x}(\alpha)=\langle \, x_1(\alpha_1,\alpha_2),x_2(\alpha_1,\alpha_2),x_3(\alpha_1,\alpha_2)\, \rangle
\end{equation}
to describe the surface $S$ nearby.
We assume that ${\textbf{x}(0)=\textbf{0}}$. Let ${\textbf{T}_1=\frac{\partial  \textbf{x}}{\partial \alpha_1}}$ and ${\textbf{T}_2=\frac{\partial  \textbf{x}}{\partial \alpha_2}}$ be the tangent vectors to the surface; the metric tensor is ${g_{ij}=\textbf{T}_i\cdot \textbf{T}_j}$, ${i,j=1,2}$ and ${\textbf{n}_0= \textbf{n}(0)}$ is the unit normal at ${\alpha =0}$. We make some simplifying assumptions about the $\alpha$-coordinate system; in particular, we assume the metric tensor is the identity at ${\alpha =0}$, ${g_{ij}=\del_{ij}}$. We also assume that ${\frac{\partial g_{ij}}{\partial \alpha_k}=0}$, ${i,j,k=1,2}$ at $\alpha = 0$, which is equivalent to the Christoffel symbols being zero at ${\alpha =0}$. Furthermore, we assume that ${\textbf{T}_1, \textbf{T}_2}$ have the directions of principal curvature at ${\alpha =0}$.
Since ${\frac{\partial g_{ij}}{\partial \alpha_k}=0}$, it can be shown that ${\textbf{T}_i \cdot \frac{\partial \textbf{T}_j}{\partial \alpha_k}=0}$ at $\alpha = 0$.

We now look at the Taylor expansions for ${\textbf{x}(\alpha )}$ near 0,
\begin{equation}\label{xtaylor}
    \textbf{x}(\alpha )=\textbf{T}_1 \alpha _1 +\textbf{T}_2 \alpha _2 + \frac{1}{2} \frac{\partial \textbf{T}_1}{\partial \alpha _1}\alpha _1 ^2+ \frac{\partial \textbf{T}_2}{\partial \alpha _1}\alpha _1 \alpha _2 +\frac{1}{2} \frac{\partial \textbf{T}_2}{\partial \alpha _2}\alpha _2 ^2 +O(|\alpha |^3),
\end{equation}
where the tangents and their derivatives are evaluated at ${\alpha =0}$. {Since ${\{ \textbf{T}_1, \textbf{T}_2, \textbf{n}_0 \}}$ form an orthonormal basis at ${\alpha =0}$, we can write}
\begin{equation}\label{pen}
    \frac{\partial \textbf{T}_1}{\partial \alpha _1}=(\frac{\partial \textbf{T}_1}{\partial \alpha _1}\cdot \textbf{T}_1)\textbf{T}_1+(\frac{\partial \textbf{T}_1}{\partial \alpha _1}\cdot \textbf{T}_2)\textbf{T}_2+(\frac{\partial \textbf{T}_1}{\partial \alpha _1}\cdot \textbf{n}_0 )\textbf{n}_0 ,
\end{equation}
and since ${\textbf{T}_i \cdot \frac{\partial \textbf{T}_j}{\partial \alpha_k}=0}$, the first two terms are 0. Thus when ${\alpha=0}$, equation~\eqref{pen} becomes
\begin{equation}
   \frac{\partial \textbf{T}_1}{\partial \alpha _1}=\kappa _1 \textbf{n}_0  .
\end{equation}
Similarly, ${\frac{\partial \textbf{T}_2}{\partial \alpha _2}=\kappa _2 \textbf{n}_0}$, where ${\kappa _1 ,\kappa _2}$ are the principal curvatures. As for $\frac{\partial \textbf{T}_1}{\partial \alpha _2}$ which is equal to $\frac{\partial \textbf{T} _2}{\partial \alpha _1}$ for a smooth surface, we have
\begin{equation}
    \frac{\partial \textbf{T}_1}{\partial \alpha _2} = (\frac{\partial \textbf{T}_1}{\partial \alpha _2} \cdot  \textbf{T}_1)\textbf{T}_1+(\frac{\partial \textbf{T}_1}{\partial \alpha _2} \cdot \textbf{T}_2)\textbf{T}_2+(\frac{\partial \textbf{T}_1}{\partial \alpha _2} \cdot \textbf{n}_0)\textbf{n}_0 = (\frac{\partial \textbf{T}_1}{\partial \alpha _2} \cdot \textbf{n}_0 )\textbf{n}_0 ,
\end{equation}
and this is 0 as well since $\textbf{T}_1$ and $\textbf{T}_2$ have the directions of principal curvature. Then~\eqref{xtaylor} becomes 
\begin{equation} \label{xtaylor2}
    \textbf{x}(\alpha )= \textbf{T}_1 \alpha_1 +\textbf{T}_2 \alpha_2 +\frac{1}{2} \kappa _1 \textbf{n}_0 \alpha _1 ^2 +\frac{1}{2} \kappa _2 \textbf{n}_0 \alpha _2 ^2 +O(|\alpha |^3),
\end{equation}
see equation (2.6) in~\cite{beale-04}. 

For the normal vector ${\textbf{n}(\alpha)}$, we have
\begin{equation}\label{ntaylor}
    \textbf{n}(\alpha ) = \textbf{n}_0 + \frac{\partial \textbf{n}}{\partial \alpha _1}(0) \alpha _1 + \frac{\partial \textbf{n}}{\partial \alpha _2}(0) \alpha _2 + O(|\alpha |^2),
\end{equation}
where
\begin{equation}\label{2nn}
    \textbf{n} = \frac{\textbf{T}_1 \times \textbf{T}_2 }{\sqrt{g}},
\end{equation}
with
\begin{equation}\label{apfel}
    g=g_{11}g_{22}-g_{12}g_{21}=|\textbf{T}_1 \times \textbf{T}_2 |^2.
\end{equation} 
To find $\frac{\partial \textbf{n}}{\partial \alpha _1}$ we apply $\pa/\pa \alpha_1$ to the identity
$\textbf{n}(\alpha)\cdot\textbf{T}_1(\alpha) \equiv 0$, finding at $\alpha = 0$,
\beq\label{dnt1}
\frac{\partial \textbf{n}}{\partial \alpha _1}(0)\cdot\textbf{T}_1 (0) = 
  - \textbf{n}_0 \cdot\frac{\partial \textbf{T}_1}{\partial \alpha _1}(0) = -\kappa_1
\eeq 
In the same way, from $\textbf{n}\cdot\textbf{T}_2 \equiv 0$ and
$\textbf{n}\cdot\textbf{n} \equiv 1$, we get at $\alpha = 0$
\beq \frac{\partial \textbf{n}}{\partial \alpha _1}\cdot\textbf{T}_2 = 0\,,\quad
\frac{\partial \textbf{n}}{\partial \alpha _1}\cdot \textbf{n}_0 = 0\,.
\eeq
Thus
\begin{equation}
    \frac{\partial \textbf{n}}{\partial \alpha _1}(0) = -\kappa _1 \textbf{T}_1 (0).
\end{equation}
Similarly we can find that
\begin{equation}
    \frac{\partial \textbf{n}}{\partial \alpha _2}(0) = -\kappa _2 \textbf{T}_2 (0).
\end{equation}
Then the expansion in~\eqref{ntaylor} becomes
\begin{equation}\label{ntaylor2}
    \textbf{n}(\alpha ) = \textbf{n}_0-\kappa _1 \textbf{T}_1 (0) \alpha _1 -\kappa _2 \textbf{T}_2 (0) \alpha _2 +O(|\alpha |^2),
\end{equation}
see (2.7) in~\cite{beale-04}. Also since ${\textbf{y} = \textbf{0}}$,
\begin{equation}
    r=\sqrt{|\textbf{x} - \textbf{y}|^2} = \sqrt{\alpha _1 ^2 + \alpha _2 ^2 + O(|\alpha |^4 )} = | \alpha |  +O(|\alpha |^3 ).
\end{equation}
We also have the expansion for $f$,
\begin{equation}\label{ftaylor}
    f(\alpha ) = f_0 +f_1 \alpha _1 + f_2 \alpha _2 + O(|\alpha |^2 ),
\end{equation}
where ${f_0=f(0)}$, ${f_1=\frac{\partial f}{\partial \alpha _1}(0)}$, ${f_2=\frac{\partial f}{\partial \alpha _2}(0)}$.\\

%%%%%%%%%%%%%%%%%%%%%%%%%%%%%%%%%%%%%%%%%
%%%%%%%%%%%%%%%%%%%%%%%%%%%%%%%%%%%%%%%%%

\section{Singularity reduction}
\label{sec: Sing_mod}

We will use the expansions from the previous section to modify the singularity in the adjoint double layer so that the new integrand is bounded. We start with the more familiar double layer potential.
\begin{lemma}
The double layer potential 
\begin{equation}\label{DoubleLayer}
    \int_{S} \frac{\textbf{n}(\textbf{x})\cdot (\textbf{x}-\textbf{y})}{4\pi |\textbf{x}-\textbf{y}|^3}f(\textbf{x}) \, dS( \textbf{x})  ,
\end{equation}
has a singular integrand for ${\textbf{x}=\textbf{y}}$.
\end{lemma}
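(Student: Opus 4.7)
The plan is to apply the Taylor expansions of Section~\ref{sec: Preliminaries} directly to the integrand and read off its leading-order behavior near the singular point. Taking $\textbf{y}=\textbf{0}$ with $\textbf{x}(\textbf{0})=\textbf{0}$ as in the setup there, the expression reduces to $\textbf{n}(\alpha)\cdot \textbf{x}(\alpha)/|\textbf{x}(\alpha)|^3$ (times the bounded factor $f/(4\pi)$). Since $|\textbf{x}(\alpha)|^3 = |\alpha|^3 + O(|\alpha|^5)$, the order of the singularity is governed entirely by how fast the numerator $\textbf{n}(\alpha)\cdot \textbf{x}(\alpha)$ vanishes.

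The main computation is to substitute~\eqref{xtaylor2} and~\eqref{ntaylor2} into the dot product. The constant and the two linear terms all vanish because $\textbf{n}_0\perp \textbf{T}_i$, while the quadratic terms split into two contributions: the $\textbf{n}_0$-component of the quadratic part of $\textbf{x}(\alpha)$ gives $+\frac{1}{2}(\kappa_1\alpha_1^2+\kappa_2\alpha_2^2)$, and the linear part of $\textbf{n}(\alpha)$ dotted with the linear part of $\textbf{x}(\alpha)$ gives $-(\kappa_1\alpha_1^2+\kappa_2\alpha_2^2)$. The two combine to yield $\textbf{n}(\alpha)\cdot \textbf{x}(\alpha) = -\frac{1}{2}(\kappa_1\alpha_1^2+\kappa_2\alpha_2^2) + O(|\alpha|^3)$, so the numerator vanishes only to quadratic order.

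Dividing through, the integrand behaves like $-\frac{1}{8\pi}(\kappa_1\alpha_1^2+\kappa_2\alpha_2^2)\,f(\textbf{0})/|\alpha|^3 + O(1)$, which is of order $1/|\alpha|=1/r$ and hence blows up as $\alpha\to 0$, proving the lemma. The argument is essentially a short algebraic manipulation, so there is no real obstacle; the only point requiring care is tracking the cancellation of the constant, linear, and half of the quadratic contributions, all of which are forced by the orthonormality conditions built into the $\alpha$-coordinate system in Section~\ref{sec: Preliminaries}. It is worth noting that the leading $1/r$ behavior is only a weak singularity, which is why the integral itself still converges and why a singularity-reduction step is needed to make the integrand bounded; this stronger statement is beyond what the present lemma asserts.
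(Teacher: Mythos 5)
Your proposal is correct and follows essentially the same route as the paper: substitute the expansions \eqref{xtaylor2} and \eqref{ntaylor2} to get $\textbf{n}(\alpha)\cdot\textbf{x}(\alpha) = -\tfrac{1}{2}(\kappa_1\alpha_1^2+\kappa_2\alpha_2^2)+O(|\alpha|^3)$, so the integrand is of order $1/|\alpha|$ and hence singular at $\alpha=0$. Your extra bookkeeping of which terms cancel, and the remark that this is only a weak (integrable) singularity, are correct elaborations of the same computation shown in the paper's display \eqref{a}.
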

\begin{proof} After the variable change to $\alpha$ and Taylor expansions in~\eqref{xtaylor2} and \eqref{ntaylor2}, we get
\begin{equation}
     \textbf{n}(\textbf{x}) \cdot (\textbf{x}-\textbf{y}) = \textbf{n}(\alpha ) \cdot \textbf{x}(\alpha )  = -\frac{1}{2} (\kappa _1 \alpha _1 ^2 + \kappa _2 \alpha _2 ^2 )+O(|\alpha |^3 ).
\end{equation}
Thus the integrand has the form
\begin{equation}\label{a}
   \frac{( \textbf{n}(\textbf{x})\cdot (\textbf{x}-\textbf{y}))f(\textbf{x})}{4 \pi |\textbf{x} - \textbf{y}|^3} = \frac{ -\frac{1}{2} f_0 (\kappa _1 \alpha_1 ^2 +\kappa _2 \alpha _2 ^2 ) +O(|\alpha |^3 )}{4\pi (|\alpha |+O(|\alpha |^3 ))^3},
\end{equation}
which is singular at ${\alpha =0}$.
\end{proof}

In order to reduce the singularity in the double layer, the well-known identity
\begin{equation}\label{sub}
     \int_{S} \frac{\textbf{n}(\textbf{x})\cdot (\textbf{x}-\textbf{y})}{4\pi |\textbf{x}-\textbf{y}|^3} \, dS(\textbf{x}) = \frac{1}{2},\quad \textbf{y}\in S,
\end{equation}
is often used.
\begin{lemma}
The double layer potential \eqref{DoubleLayer} can be expressed as
\begin{equation}\label{dlsub}
    \int_{S} \frac{\textbf{n}(\textbf{x})\cdot (\textbf{x} - \textbf{y})}{4\pi |\textbf{x} - \textbf{y}|^3}\, (f(\textbf{x}) - f(\textbf{y}))\, dS( \textbf{x}) + \frac{1}{2} f(\textbf{y}),
\end{equation}
so that the integrand is bounded at ${\textbf{x}=\textbf{y}}$.
\end{lemma}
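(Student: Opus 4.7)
The plan is to use the classical singularity subtraction idea: we add and subtract $f(\textbf{y})$ inside the integrand of \eqref{DoubleLayer}. Concretely, write $f(\textbf{x}) = (f(\textbf{x})-f(\textbf{y})) + f(\textbf{y})$ so that
\[
\int_S \frac{\textbf{n}(\textbf{x})\cdot(\textbf{x}-\textbf{y})}{4\pi|\textbf{x}-\textbf{y}|^3}f(\textbf{x})\,dS(\textbf{x})
= \int_S \frac{\textbf{n}(\textbf{x})\cdot(\textbf{x}-\textbf{y})}{4\pi|\textbf{x}-\textbf{y}|^3}(f(\textbf{x})-f(\textbf{y}))\,dS(\textbf{x}) + f(\textbf{y})\int_S \frac{\textbf{n}(\textbf{x})\cdot(\textbf{x}-\textbf{y})}{4\pi|\textbf{x}-\textbf{y}|^3}\,dS(\textbf{x}).
\]
Then I would invoke the standard jump identity \eqref{sub}, which evaluates the second integral to $\tfrac{1}{2}$, giving the $\tfrac{1}{2}f(\textbf{y})$ term in \eqref{dlsub}.

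Next, I would verify that the new integrand is bounded as $\textbf{x}\to\textbf{y}$. In the local coordinates introduced in Section~\ref{sec: Preliminaries} with $\textbf{y}=\textbf{0}$, the previous lemma showed that
\[
\textbf{n}(\textbf{x})\cdot(\textbf{x}-\textbf{y}) = -\tfrac{1}{2}(\kappa_1\alpha_1^2+\kappa_2\alpha_2^2) + O(|\alpha|^3),
\qquad |\textbf{x}-\textbf{y}|^3 = |\alpha|^3 + O(|\alpha|^5),
\]
so the kernel is $O(1/|\alpha|)$. Using the Taylor expansion \eqref{ftaylor} for $f$, we have $f(\textbf{x})-f(\textbf{y}) = f_1\alpha_1 + f_2\alpha_2 + O(|\alpha|^2) = O(|\alpha|)$. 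Multiplying these gives an integrand of size $O(1)$ as $\alpha\to 0$, hence bounded.

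There is no serious obstacle in this argument; the only points that need a line of care are (i) citing \eqref{sub} correctly (it applies for $\textbf{y}\in S$, which is exactly our setting), and (ii) pairing the $O(|\alpha|^2)$ numerator of the kernel with the $O(|\alpha|)$ factor from $f(\textbf{x})-f(\textbf{y})$ against the $|\alpha|^3$ denominator to conclude boundedness rather than mere integrability. Both are straightforward given the expansions already established.
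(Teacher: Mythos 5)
Your proposal is correct and follows essentially the same route as the paper: subtract $f(\textbf{y})$ using the identity \eqref{sub} to produce the $\tfrac{1}{2}f(\textbf{y})$ term, then use the Taylor expansions of Section~\ref{sec: Preliminaries} to see that the $O(|\alpha|^2)$ kernel numerator times the $O(|\alpha|)$ factor $f(\textbf{x})-f(\textbf{y})$ balances the $|\alpha|^3$ denominator, so the integrand is bounded. This matches the paper's argument leading to \eqref{c}.
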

\begin{proof}
Using the expansion in~\eqref{ftaylor}, we can write
\begin{equation}
    f(\textbf{x})-f(\textbf{y}) = f(\alpha )-f(0) = f_1 \alpha _1 +f_2 \alpha _2 +O(|\alpha |^2),
\end{equation}
and the integrand in~\eqref{dlsub} becomes
\begin{equation}\label{c}
    \frac{(\textbf{n}(\alpha ) \cdot \textbf{x}(\alpha )) \, (f(\alpha )-f(0))}{4\pi |\textbf{x}(\alpha ) |^3} = \frac{-\frac{1}{2}(\kappa _1 \alpha _1 ^2 +\kappa _2 \alpha _2 ^2 )\, (f_1 \alpha _1 +f_2 \alpha _2 ) + O(|\alpha |^4 )}{4\pi (|\alpha | + O(|\alpha |^3 ))^3},
\end{equation}
which is bounded.
\end{proof}
We now turn our attention to the adjoint double layer integral~\eqref{adjdl}. We first show the integrand is singular, and then propose a modification that gives a bounded integrand.
\begin{lemma}
The adjoint double layer potential,
\begin{equation}\label{adl}
 v(\bd{y}) = \int_{S}   \frac{\textbf{n}(\textbf{y}) \cdot (\textbf{y}-\textbf{x})}{4\pi |\textbf{x} - \textbf{y}|^3} f(\textbf{x})\, dS(\textbf{x}),
\end{equation}
has a singular integrand at ${\textbf{x}=\textbf{y}}$.
\end{lemma}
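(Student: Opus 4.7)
The plan is to follow the same template as the proof of the double layer lemma, with the simplification that the normal $\textbf{n}(\textbf{y})$ is now evaluated at the fixed point $\textbf{y}=\textbf{0}$ rather than at the variable point $\textbf{x}(\alpha)$. After the change of variables $\textbf{x} \to \textbf{x}(\alpha)$, the only Taylor expansion needed in the numerator is \eqref{xtaylor2} for $\textbf{x}(\alpha)$, together with \eqref{ftaylor} for $f$; no expansion of the normal is required.

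First I would write $\textbf{n}(\textbf{y})\cdot(\textbf{y}-\textbf{x}) = -\textbf{n}_0\cdot\textbf{x}(\alpha)$ and substitute \eqref{xtaylor2}. Since $\{\textbf{T}_1,\textbf{T}_2,\textbf{n}_0\}$ is orthonormal at $\alpha=0$, the linear terms $\textbf{T}_1\alpha_1+\textbf{T}_2\alpha_2$ drop out of the inner product with $\textbf{n}_0$, and I am left with
\begin{equation}
\textbf{n}(\textbf{y})\cdot(\textbf{y}-\textbf{x}) = -\tfrac{1}{2}\bigl(\kappa_1\alpha_1^2+\kappa_2\alpha_2^2\bigr) + O(|\alpha|^3).
\end{equation}
Multiplying by $f(\textbf{x}) = f_0 + O(|\alpha|)$ from \eqref{ftaylor} and combining with the denominator $4\pi|\textbf{x}-\textbf{y}|^3 = 4\pi(|\alpha|+O(|\alpha|^3))^3$, the integrand takes the form
\begin{equation}
\frac{\textbf{n}(\textbf{y})\cdot(\textbf{y}-\textbf{x})f(\textbf{x})}{4\pi|\textbf{x}-\textbf{y}|^3} = \frac{-\tfrac{1}{2}f_0(\kappa_1\alpha_1^2+\kappa_2\alpha_2^2) + O(|\alpha|^3)}{4\pi(|\alpha|+O(|\alpha|^3))^3},
\end{equation}
which is of order $1/|\alpha|$ as $\alpha\to 0$ and therefore singular at $\textbf{x}=\textbf{y}$.

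There is really no obstacle; the lemma is essentially identical in statement and proof to the double layer case, and the leading order of the singularity comes out the same. The only subtlety worth noting is that the cancellation of linear terms here comes from the orthogonality $\textbf{n}_0\cdot\textbf{T}_i=0$ at the single point $\alpha=0$, whereas in the double layer lemma the same cancellation relied on the slightly more involved combination of expansions \eqref{xtaylor2} and \eqref{ntaylor2}. I would conclude the proof by stating that this singularity motivates the modification developed in the next lemma.
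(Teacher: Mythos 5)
Your proposal is correct and follows essentially the same route as the paper: use the assumption $\textbf{y}=\textbf{0}$ to write $\textbf{n}(\textbf{y})\cdot(\textbf{y}-\textbf{x})=-\textbf{n}_0\cdot\textbf{x}(\alpha)$, expand via \eqref{xtaylor2} so the linear terms vanish by orthogonality, and combine with \eqref{ftaylor} and the denominator expansion to exhibit the $O(1/|\alpha|)$ behavior, exactly as in the paper's display \eqref{b}. Your remark contrasting this with the double layer case (where \eqref{ntaylor2} is also needed) is accurate but not a substantive departure.
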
 
\begin{proof}
Using the expression in~\eqref{xtaylor2} and the assumption ${\textbf{y} = 0}$,
\begin{equation}
    \textbf{n}(\textbf{y})\cdot (\textbf{y} -\textbf{x}) = -\textbf{n}_0 \cdot \textbf{x}(\alpha ) = -\frac{1}{2}(\kappa _1 \alpha _1 ^2 + \kappa _2 \alpha _2 ^2) +O(|\alpha |^3 ),
\end{equation}
and the integrand in~\eqref{adl} becomes
\begin{equation}\label{b}
    \frac{ - \frac{1}{2}f_0 (\kappa _1 \alpha _1 ^2 +\kappa _2 \alpha _2 ^2)+O(|\alpha |^3)}{4\pi (|\alpha |+O(|\alpha |^3 ))^3},
\end{equation}
which is singular at ${\alpha = 0}$, similarly to~\eqref{a}.
\end{proof}
We cannot use subtraction in the adjoint double layer to reduce the singularity in the same way it was done in~\eqref{dlsub} for the double layer potential. Nonetheless, the expansions in~\eqref{b} and~\eqref{a} suggest the following modification to the adjoint double layer. If ${\textbf{n}(\textbf{y})}$ is replaced with ${\textbf{n}(\textbf{y})+\textbf{n}(\textbf{x})}$ in~\eqref{adl}, the integrand is bounded at ${\textbf{x}=\textbf{y}}$ due to the leading terms in~\eqref{b} and~\eqref{a} cancelling exactly and the remainder being ${O(|\alpha |^3 )}$; see equation~\eqref{ab} below. This approach to reduce the singularity was first noted in~\cite{zinchenko-rother-davis-97}, pp.~1503-04, and quoted in~\cite{pozrikidis-01}, p.~284. We multiply the identity~\eqref{sub} by ${f(\textbf{y})}$ and subtract from~\eqref{adjdl}, obtaining the modified adjoint double layer,
\begin{equation}\label{madl}
    v(\bd{y}) = \int_{S}   \frac{ ( f(\textbf{x}) \textbf{n} (\textbf{y}) + f(\textbf{y})\textbf{n} (\textbf{x})) \cdot (\textbf{y} - \textbf{x}) } {4\pi |\textbf{x} - \textbf{y}| ^3} \, dS(\textbf{x}) + \frac{1}{2} f(\textbf{y}).
\end{equation}

\begin{theorem} 
The modified adjoint double layer~\eqref{madl} has a bounded integrand at $\textbf{x}=\textbf{y}$.
\end{theorem}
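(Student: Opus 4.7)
The plan is to substitute the Taylor expansions from Section~\ref{sec: Preliminaries} into the numerator of~\eqref{madl} and verify the cancellation that the surrounding text predicts. First I would compute the two normal dot products $\mathbf{n}(\mathbf{y})\cdot(\mathbf{y}-\mathbf{x})$ and $\mathbf{n}(\mathbf{x})\cdot(\mathbf{y}-\mathbf{x})$ separately. The first was already evaluated in the proof of the preceding lemma:
\begin{equation*}
\mathbf{n}(\mathbf{y})\cdot(\mathbf{y}-\mathbf{x}) = -\mathbf{n}_0\cdot\mathbf{x}(\alpha) = -\tfrac{1}{2}(\kappa_1\alpha_1^2+\kappa_2\alpha_2^2)+O(|\alpha|^3),
\end{equation*}
and the second follows from~\eqref{xtaylor2} and~\eqref{ntaylor2} exactly as in the double layer lemma, giving
\begin{equation*}
\mathbf{n}(\mathbf{x})\cdot(\mathbf{y}-\mathbf{x}) = -\mathbf{n}(\alpha)\cdot\mathbf{x}(\alpha) = +\tfrac{1}{2}(\kappa_1\alpha_1^2+\kappa_2\alpha_2^2)+O(|\alpha|^3).
\end{equation*}
The sign reversal between the two quadratic terms comes from the fact that $\mathbf{n}(\alpha)-\mathbf{n}_0$ is purely tangential to leading order, and those tangential components contribute $-(\kappa_1\alpha_1^2+\kappa_2\alpha_2^2)$ when dotted with $\mathbf{x}(\alpha)$, flipping the sign of the common normal contribution.

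Next I would write $f(\mathbf{x})=f_0+O(|\alpha|)$ using~\eqref{ftaylor} and $f(\mathbf{y})=f_0$, and expand the numerator
$(f(\mathbf{x})\mathbf{n}(\mathbf{y})+f(\mathbf{y})\mathbf{n}(\mathbf{x}))\cdot(\mathbf{y}-\mathbf{x})$. The $f_0$ contributions from the two summands cancel exactly at the quadratic level by the step above. What remains comes from $(f(\mathbf{x})-f_0)\,\mathbf{n}(\mathbf{y})\cdot(\mathbf{y}-\mathbf{x}) = O(|\alpha|)\cdot O(|\alpha|^2)=O(|\alpha|^3)$, together with the $O(|\alpha|^3)$ remainders from the two dot product expansions. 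Hence the numerator is $O(|\alpha|^3)$.

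Finally, since the denominator $4\pi|\mathbf{x}-\mathbf{y}|^3 = 4\pi(|\alpha|+O(|\alpha|^3))^3$ is of order $|\alpha|^3$, the integrand is $O(1)$ as $\alpha\to 0$ and thus bounded near the diagonal $\mathbf{x}=\mathbf{y}$. There is no real obstacle; the only substantive observation is the sign reversal between $\mathbf{n}_0\cdot\mathbf{x}(\alpha)$ and $\mathbf{n}(\alpha)\cdot\mathbf{x}(\alpha)$ at the quadratic level, which is already implicit in the computations~\eqref{b} and~\eqref{a} of the two preceding lemmas and is precisely what makes the replacement $\mathbf{n}(\mathbf{y})\leadsto\mathbf{n}(\mathbf{y})+\mathbf{n}(\mathbf{x})$ the correct analogue of the density subtraction used for the ordinary double layer.
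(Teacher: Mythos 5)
Your proposal is correct and follows essentially the paper's own argument: both rest on the Taylor expansions~\eqref{xtaylor2} and~\eqref{ntaylor2} showing that the quadratic terms in $\bigl(\textbf{n}(\textbf{y})+\textbf{n}(\textbf{x})\bigr)\cdot(\textbf{y}-\textbf{x})$ cancel, leaving an $O(|\alpha|^3)$ numerator against the $O(|\alpha|^3)$ denominator. The only cosmetic difference is the grouping of the numerator (you pair $f_0$ with the sum of normals and keep the $f$-increment with $\textbf{n}(\textbf{y})$, while the paper uses the split~\eqref{split} and cites the double-layer-with-subtraction bound for the second piece), which does not change the substance.
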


\begin{proof}
We write in the numerator of~\eqref{madl}, 
\begin{equation}\label{split}
    f(\textbf{x})\textbf{n}(\textbf{y}) + f(\textbf{y})\textbf{n}(\textbf{x}) = f(\textbf{x})(\textbf{n}(\textbf{y}) + \textbf{n}(\textbf{x})) + (f(\textbf{y}) - f(\textbf{x}))\textbf{n}(\textbf{x}).
\end{equation}
The Taylor expansion of the first term on the right hand side of~\eqref{split} amounts to subtracting the numerators of~\eqref{b} and~\eqref{a}, so we get 
\begin{equation}\label{ab}
    f(\textbf{x})(\textbf{n}(\textbf{y}) + \textbf{n}(\textbf{x})) \cdot (\textbf{y} -\textbf{x}) = O(|\alpha |^3 ).
\end{equation}

The dot product of the second term with ${(\textbf{y} -\textbf{x})}$ is the same as the numerator in the double layer integrand in~\eqref{dlsub} and is therefore bounded also. 

\end{proof}

A more detailed analysis of the terms in the expansion of~\eqref{ab} is given in the appendix. 

%%%%%%%%%%%%%%%%%%%%%%%%%%%%%%%%%%%%%%%%%
%%%%%%%%%%%%%%%%%%%%%%%%%%%%%%%%%%%%%%%%%

\section{Numerical Method}
\label{sec: Regularization}

\subsection{Kernel regularization}

To evaluate the modified adjoint double layer~\eqref{madl} numerically, we follow the approach in~\cite{beale-04} and replace the Green's function with a smooth version,
\begin{equation}\label{lamp}
    G_\delta (\textbf{x}- \textbf{y})=G(\textbf{x}- \textbf{y}) \erf(|\textbf{x}- \textbf{y}|/\delta ),
\end{equation}
where $\erf$ is the error function, 
\begin{equation}
    \erf (\rho) = \frac{2}{\sqrt{\pi}} \int _{-\infty} ^{\rho} e^{-t^2} dt,
\end{equation}
and ${\del >0}$ is the regularization parameter. The normal derivative of the Green's function then becomes
\begin{equation}\label{dGdn_del}
    \frac{\partial }{\partial n( \textbf{y})} G_ {\delta}( \textbf{x}- \textbf{y}) = \textbf{n}(\textbf{y}) \cdot \nabla_\bd{y} G_ \delta (\textbf{x}- \textbf{y}) = \textbf{n}(\textbf{y}) \cdot \nabla_\bd{y} G (\textbf{x}- \textbf{y}) \, s(| \textbf{x}- \textbf{y} | / \delta ),
\end{equation}
where $s$ is the shape factor, and with the choice of regularization~\eqref{lamp}, we have 
\begin{equation}\label{s_order3}
    s(\rho) = \erf (\rho) - \frac{2}{\sqrt{\pi}} \rho e ^{-\rho ^2}.
\end{equation}
To compute the adjoint double layer~\eqref{adjdl}, it is first modified as in~\eqref{madl}, and then replaced with its regularized version,
\begin{equation}\label{madl_del}
    v_\del (\textbf{y}) = \int_{S} \frac{ ( f(\textbf{x}) \textbf{n} (\textbf{y}) + f(\textbf{y})\textbf{n} (\textbf{x})) \cdot (\textbf{y} - \textbf{x})\, s(| \textbf{x}- \textbf{y} | / \delta )} {4\pi |\textbf{x} - \textbf{y}| ^3} dS(\textbf{x}) + \frac{1}{2} f(\textbf{y}).
\end{equation}

Below we describe the quadrature used to compute~\eqref{madl_del}. The numerical error consists of two parts, one being the regularization error due to replacing $G$ with $G_\del$, and the other due to the quadrature itself. We first discuss the regularization error and then the quadrature and its error.

\subsection{Analysis of regularization error}
We estimate the error due to replacing $G$ with $G_\delta$, i.e, the replacement of the modified adjoint double layer potential~\eqref{madl} with~\eqref{madl_del} for points on the surface ${\textbf{y}\in S}$. We first show that with the choice of smoothing in~\eqref{lamp}, the regularization error is ${O(\delta ^3 )}$. We then describe a simple way to increase the accuracy to ${O(\delta ^5 )}$ by modifying the choice of $G_\delta$.

We first note that the modified adjoint double layer~\eqref{madl} can be split into two parts using~\eqref{split}, where the second part is the double layer potential with subtraction which was treated in~\cite{beale-04} and shown to be $O(\delta^3)$. Thus we only need to examine the first part of the integral given by
\begin{equation}
    \int _S f(\textbf{x})(\textbf{n}(\textbf{y})+\textbf{n}(\textbf{x}))\cdot \nabla G(\textbf{x}-\textbf{y}) \, dS(\textbf{x}).
\end{equation}
The regularized version of this integral is
\begin{equation}
    \int _S f(\textbf{x})(\textbf{n}(\textbf{y})+\textbf{n}(\textbf{x}))\cdot \nabla G_{\delta}(\textbf{x}-\textbf{y}) \, dS(\textbf{x}).
\end{equation}
Since the regularization error is localized, we can assume the function ${f=0}$ outside of one coordinate patch. We can write the error as an integral on this patch, regarding $f$ as a function of $\alpha$, and assuming ${\textbf{x}(0) = \textbf{y} = \textbf{0}}$,
\begin{equation}\label{error}
    \epsilon = \int [\nabla G_\delta ( \textbf{x} (\alpha ) ) - \nabla G( \textbf{x}( \alpha ) )] \cdot ( \textbf{n}_0 + \textbf{n} (\alpha ) ) f(\alpha ) \, dS(\alpha ).
\end{equation}
Here
\begin{equation}
    \nabla (G_{\delta}-G)=\frac{\textbf{x}}{r} \frac{\partial }{\partial r}(G_{\delta} -G) ,
\end{equation}
\begin{equation}\label{p}
    \frac{\partial }{\partial r}(G_\delta -G)=\frac{1}{4\pi r^2}\phi (r/\delta ),
\end{equation}
where ${r = |\textbf{x}- \textbf{y}|}$, and
\begin{equation}\label{phi}
    \phi (\rho )=-\erfc(\rho )+\rho \erfc'(\rho )=-\erfc(\rho )-(\frac{2}{\sqrt{\pi }})\rho e^{-\rho ^2},
\end{equation}
where $\erfc$ is the complementary error function, ${\erfc(\rho) = 1-\erf (\rho )}$.
Then~\eqref{error} becomes
\begin{equation}
    \epsilon =\frac{1}{4\pi} \int \frac{\textbf{x}(\alpha )\cdot (\textbf{n}(0)+\textbf{n}(\alpha ))}{r^3}\phi (r/\delta )f(\alpha )\, dS(\alpha ).
\end{equation}
Using the expansion in~\eqref{xtaylor2} for $\textbf{x}$ in the $\alpha$-coordinate system, we get
\begin{equation}\label{rsq}
    r^2 =|\textbf{x}|^2=|\alpha |^2  +O(|\alpha |^4 ),
\end{equation}
so that
\begin{equation}
   r/|\alpha| = 1 + O(|\alpha |^2)
\end{equation}
is smooth near $\alpha = 0$.
We make a change of variables
\beq
  \xi = (r/|\alpha|)\alpha
\eeq
so that $|\xi| = r$.  Near zero we have $\alpha = \xi + O(|\xi|^3)$ and
$|\pa\alpha/\pa\xi| = 1 + O(|\xi|^2)$.

We can now write the smoothing error in the form
\begin{equation}\label{error2}
    \epsilon =\frac{1}{4\pi}\int \frac{\phi (|\xi|/\delta )}{|\xi|^{3}} \,w(\xi)\,d\xi,
\end{equation}
where we have 
\begin{equation}\label{bigguy}
   w(\xi)=[ \textbf{x} (\xi) \cdot (\textbf{n}_0 +\textbf{n}(\alpha)) ] \,
         f(\alpha )|\pa\alpha/\pa\xi||\textbf{T}_1\times\textbf{T}_2| .
\end{equation}
With the Taylor expansions~\eqref{xtaylor2} and~\eqref{ntaylor2}, 
as in \eqref{ab}, we have
\begin{equation}
    \textbf{x} (\alpha) \cdot (\textbf{n}_0 +\textbf{n} (\alpha) )= 
          O(|\alpha |^3) = O(|\xi|^3),
\end{equation}
and~\eqref{bigguy} becomes
\begin{equation}\label{w}
     w(\xi)= f_0 O(|\xi|^3) +O(|\xi|^4).
\end{equation}
The third order terms in $\xi$ are odd and
will contribute 0 to the integral~\eqref{error2}. We check that the remainder
$R(\xi) = O(|\xi|^4 )$ is negligible: with the change of variables ${\xi = \delta \zeta}$, we can write $R(\xi) = \delta ^4 \tilde{R}(\zeta )$ for some bounded function $\tilde{R}$ which is
$O(|\zeta|^4)$. The contribution to~\eqref{error2} of the remainder is 
\begin{equation}\label{err_cubic}
    (4\pi)^{-1} \delta ^{-3+4+2}\int \frac{ \phi (|\zeta |) }{ |\zeta |^{3} } \,\tilde{R} (\zeta )\, dS(\zeta ) = O(\delta ^3 ).
\end{equation}
We have now shown that the error $\epsilon$ in \eqref{error} is $O(\delta^3)$, as is the second part of the regularization error.

\subsection{Fifth order regularization}

The error in~\eqref{err_cubic} can be improved to ${O(\delta ^5 )}$ with a simple change in $G_\delta$, by using a modified shape function,
\begin{equation}\label{s_order5}
s_5 (r) = \erf(r) - (2/ \sqrt{\pi} )(r-2r^3 /3 )\, e^{-r^2},
\end{equation}
in place of the shape function $s$ given in~\eqref{s_order3}. The modification to $\nabla G$ is then
\beq
\nabla G _\delta (\textbf{x}) = \nabla G(\textbf{x})\,  s _5 ( |\textbf{x}| / \delta ),
\eeq
see~\cite{beale-04}, p.~607. This shape function is chosen by modifying the previous $s$ so that the integral as in~\eqref{err_cubic} with fourth order terms in $\tilde{R}(\zeta)$ is zero.

\subsection{Quadrature and combined error}
\label{sec: Quad_error}

To discretize the surface integral in~\eqref{madl_del}, we use the direct quadrature method of~\cite{beale-ying-wilson-16, wilson-10} which does not require coordinate charts or triangulations on the surface but instead uses projections on coordinate planes.  We give a brief description.
First choose an angle $\theta$ (we use $70^o$ in our numerical tests) and define a partition of unity on the unit sphere, consisting of functions ${\psi_1, \psi_2, \psi_3}$ with ${\Sigma_i \psi_i \equiv 1}$ such that ${\psi_i(\bd{n}) = 0}$ if ${|\bd{n}\cdot\bd{e}_i| \leq \cos{\theta}}$, where $\bd{e}_i$ is the $i$th coordinate vector. The partition of unity functions $\psi_i$ are constructed from the $C^\infty$ bump function ${b(r) = e^{r^2/(r^2 - 1)}}$ for ${|r| < 1}$ and zero otherwise. For a chosen grid size $h$, a set $R_3$ of quadrature points consists of points $\bf{x}$ on the surface of the form ${(j_1h,j_2h,x_3)}$ such that ${|\bd{n(x}) \cdot \bd{e}_3| \geq \cos{\theta}}$, where ${\bd{n}(\bd{x})}$ is the unit normal at $\bd{x}$. Sets $R_1$ and $R_2$ are defined similarly. For a function $f$ on the surface the integral is computed as
\beq\label{Quadrature}
\int_S f(\bd{x}) \,dS(\bd{x}) \approx \sum_{i=1}^3 \sum_{\bd{x} \in R_i} f(\bd{x}) w(\bd{x}), \qquad  w(\bd{x}) = \frac{\psi_i(\bd{n}(\bd{x}))}{| \bd{n}(\bd{x})\cdot\bd{e}_i |} \,h^2.
\eeq
The quadrature is effectively reduced to the trapezoidal rule without boundary.  Thus for regular integrands the quadrature has arbitrarily high order accuracy, limited only by the degree of smoothness of the integrand and surface. The points in $R_i$ can be found by a line search since they are well separated; see~\cite{beale-ying-wilson-16, wilson-10}. This approach of first regularizing the kernel and applying this straightforward quadrature has been shown effective for single and double layer integrals in the context of Laplace and Stokes equations~\cite{beale-ying-wilson-16, tlupova-beale-19}. 

The full error in computing the adjoint double layer integral $v$, as written in~\eqref{madl}, consists of the regularization error ${v_\del - v}$, where $v_\del$ is defined in~\eqref{madl_del}, and the discretization error from the quadrature of $v_\del$. The first error is $O(h^p)$, with ${p = 3}$ or $5$ here.  As in ~\cite{beale-04,beale-ying-wilson-16} the leading term in the discretization error is of the form ${O(h^2 e^{-c_0(\delta/h)^2})}$.  This error decreases rapidly as ${\delta/h}$ increases, and the regularization error dominates if this ratio is large enough.  We find that ${\delta/h = 3}$ works well in practice with ${p=5}$ and ${\delta/h = 2}$ with ${p=3}$, but for convergence as ${h \to 0}$ we can take ${\delta = ch^q}$ with ${q < 1}$ so that the total error is ${O(h^{pq})}$; see~\cite{beale-ying-wilson-16, tlupova-beale-19}.  Estimates for leading discretization errors in~\cite{beale-20-arxiv} support the expectation that they can be neglected.

\subsection{Discrete integral equation}

We write the integral equation~\eqref{huu} as
\beq\label{IntEq}
    A f = g,
\eeq
where $g$ is the specified value of ${\pa u/ \pa n}$ on the boundary, $f$ is the density we will solve for, ${A = -I/2 + T^*}$, $I$ is the identity and $T^*$ is the integral operator with the kernel of the adjoint double layer potential,
\begin{equation}
    \frac{\partial }{\partial n( \textbf{y})}G( \textbf{x}- \textbf{y}) = \frac{\textbf{n}(\textbf{y})\cdot (\textbf{y}- \textbf{x})}{4\pi |\textbf{x}-\textbf{y}|^3}.
\end{equation}
Assume the domain and its complement are connected. The range of $A$ consists of functions with integral zero on the boundary surface, ${\int_S g dS = 0}$, since they are normal derivatives of harmonic functions in the domain interior~\cite{folland-95}. The solution $u$ of the interior Neumann problem is unique up to an arbitrary constant. We also know that $A$ has a one-dimensional null space, and the null functions do not have integral zero on the surface. 

We will solve a discrete version of the equation~\eqref{IntEq},
\beq\label{IntEq_discrete}
    A_h f_h = g_h,
\eeq
where $f_h$ and $g_h$ have values at the quadrature points. We expect the range of $A_h$ will have codimension one, so we modify the discrete equation as 
\beq\label{IntEq_augmented}
A_h f_h + a_h\textbf{1} = g_h,
\eeq
where $a_h$ is a new scalar unknown and $\textbf{1}$ is a vector of all 1's, i.e. the constant function 1.  This allows the freedom to augment $g_h$ so that ${g_h - a_h\textbf{1}}$ is in the range of $A_h$.  We expect $a_h$ will be very small because $g_h$ is close to being in the range of $A_h$. In numerical experiments, $a_h$ was small and decreased with grid refinement. Since we have added a new unknown, we will also add a new equation.  A natural one is to require the discrete version of ${\int_S f dS = 0}$. This is a straightforward calculation using the already computed weights of the quadrature~\eqref{Quadrature}. 
Our approach is similar to that for the treatment of the Neumann problem by finite differences in~\cite{hackbusch-17}, Sec.~4.7, the Neumann integral equation in~\cite{xie-ying-20},
and the Poisson equation~\cite{beale-20}, Sec.~3.

The discrete equation~\eqref{IntEq_augmented} can be solved by any suitable method, such as a Krylov subspace method (GMRES being the most common), and for larger problems, a fast summation technique such as the fast multipole method~\cite{greengard-oneil-rachh-vico-21} or a treecode~\cite{wang-krasny-tlupova-20, boateng-tlupova-22} could be applied. To limit the scope, we solve the integral equation by successive approximations.  For the exact problem the iteration is
\begin{equation}\label{iteration}
    f^{n+1} = (1-\beta ) f^n + 2 \beta T^* f^n - 2\beta g,
\end{equation}
with $f^{0} = 0$. The iterations~\eqref{iteration} are analogous to those for the Dirichlet problem~\cite{kress-99} used in~\cite{beale-04}; either converges for ${0 < \beta < 1}$. For~\eqref{iteration} the iterates stay within the subspace of functions with integral zero.

We write the discrete version of~\eqref{iteration} as
\beq\label{iteration_discrete}
    f_h^{n+1} + a_h^{n+1}\textbf{1} =
(1-\beta ) f_h^n + 2 \beta T_h^* f_h^n - 2\beta g_h, \qquad
\sum_h f^{n+1}_h  w_h = 0,
\eeq
where $w_h$ are the quadrature weights, $a_h^{n+1}$ is a scalar, 
and $T_h^*$ is the matrix corresponding with the discretized operator $T^*$, computed with the singularity modification and the fifth order kernel.  To find $f_h^{n+1}$,
we first compute the right side $f^*$ of the first equation, next define
$a_h^{n+1}$ as the weighted average of $f^*$, and then set
$f_h^{n+1} = f^* - a_h^{n+1}\textbf{1}$, so that the second equation holds.
Once we have the density $f_h$, we can find $u_h$ using the single layer representation~\eqref{sl}, which will differ from the exact solution by an arbitrary constant. To compare $u_h$ to the exact solution $u$, we adjust the constant and set our computed solution to 
\beq\label{adjust_constant}
    u_h(\textbf{x}) = u_h(\textbf{x}) -u_h(\textbf{0}) + u(\textbf{0}).
\eeq
The integral in~\eqref{sl} is computed as in \cite{beale-ying-wilson-16}.

%%%%%%%%%%%%%%%%%%%%%%%%%%%%%%%%%%%%%%%%%
%%%%%%%%%%%%%%%%%%%%%%%%%%%%%%%%%%%%%%%%%

\section{Numerical results}
\label{sec: Numerical Results}

We present several numerical examples to illustrate our method. For the first test, we use a known solution $f$ based on a spherical harmonic to evaluate the modified adjoint double layer $v$ directly, without regularization~\eqref{madl}, then with regularization~\eqref{madl_del}, using the third order kernel~\eqref{s_order3} and the fifth order kernel~\eqref{s_order5}. For the second test, we use the same exact solution to set the Neumann condition ${\pa u/ \pa n = g_{exact}}$ and solve the boundary value problem~\eqref{naminn} for $u$. To do this, we first solve the integral equation~\eqref{huu} in its discrete form~\eqref{iteration_discrete} for $f$. We then use this solution to compute $u$ at the quadrature points as the single layer potential~\eqref{sl} and adjust the constant using~\eqref{adjust_constant}. We apply the fifth order regularization for the single layer potential given in~\cite{beale-ying-wilson-16}. We note that the solution $u$ can also be computed at points off the surface, as needed in some applications, with high accuracy using the method of corrections in~\cite{beale-ying-wilson-16}.  For the final test, we use a different exact solution for which the solution of the integral equation is not known, and solve the Neumann problem on two surfaces, an ellipsoid and a four-atom molecular surface. To see convergence as ${h \to 0}$ we choose ${\delta = ch^q}$ with $q = 4/5$ or $2/3$.  With $q = 4/5$ we observe the overall error of $O(h^4)$ as expected, and with $q = 2/3$ we find accuracy slightly higher than the predicted order.

In our first example, the surface is the unit sphere ${x_1^2+x_2^2+x_3^2 = 1}$, and both the solution $u$ of the boundary value problem~\eqref{naminn} and the solution $f$ of the integral equation~\eqref{huu} are constructed based on a spherical harmonic. As in~\cite{beale-04}, we first define 
\begin{equation}\label{test1_f}
    f(\textbf{x} ) = 1.75(y_1 - 2y_2 )(7.5y_3 ^2 - 1.5), \quad \textbf{y}=M\textbf{x},
\end{equation}
where ${M = (1/ \sqrt{6}) \left(\sqrt{2}(1,1,1)^T , \sqrt{3}(0,1,-1)^T , (-2,1,1)^T \right)}$ is an orthogonal matrix used to avoid rectangular symmetry in the test problem. The functions ${r^3 f(\textbf{x}/r)}$ and ${r^{-4} f(\textbf{x}/r)}$, ${r = |\textbf{x}|}$, are both harmonic. Then, the single layer $u$ due to $f$ is set to these harmonic functions with coefficients adjusted due to the fact that $u$ is continuous and ${\partial u/\partial n}$ has a jump equal to $f$ (see e.g.,~\cite{folland-95}) across the boundary ${r=1}$. We set ${g (\textbf{x}) = (-3/7) f(\textbf{x})}$ for the Neumann condition on the boundary, and 
\beq\label{test1_u}
 u(\textbf{x}) = - r^3 f(\textbf{x}/r) / 7,\quad r<1; \qquad u(\textbf{x}) = - r^{-4} f(\textbf{x}/r) / 7,\quad r>1.
\eeq

We use the known density function $f$ to first test the direct evaluation of the adjoint double layer potential. We compute the left side of equation~\eqref{huu} with exact values $f_{exact}$ at the quadrature points, and call these values $g_{comp}$. We then compare $g_{comp}$ with the exact values ${g_{exact}=(-3/7)f_{exact}}$. We define the error at each quadrature point $\textbf{y}$,
\begin{equation}
e_{h}(\textbf{y}) = g_{comp}(\textbf{y}) - g_{exact}(\textbf{y}),
\end{equation}
and its norms,
\begin{equation}
    \lVert e_h\rVert_2 = \left( \sum_\textbf{y} |e_{h}(\textbf{y})| ^2 / N  \right)^{1/2}, \quad \quad \lVert e_h\rVert_\infty = \max_\textbf{y}(|e_{h}(\textbf{y})| ),
\end{equation}
where $h$ is the grid size in the coordinate planes and $N$ is the number of quadrature points generated. Table~\ref{table:ADL_eval_order3} shows the results for the adjoint double layer that uses the singularity reduction but no smoothing~\eqref{madl} (with the point ${\bd{x}=\bd{y}}$ omitted), and the adjoint double layer with the singularity reduction and third order smoothing~\eqref{madl_del},~\eqref{s_order3}, with ${\del=2h}$.  In Table~\ref{table:ADL_eval_order5} we present the results using the fifth order smoothing~\eqref{s_order5} with ${\del=3h}$ and ${\del=1.5h^{4/5}}$. In each table, the ``Order" column shows the $\log_2$ of the ratio of the $L_2$ error of that row to the row above it, that is, ${\log_2\lVert e_{2h}\rVert_2 / \lVert e_h\rVert_2}$. 

The adjoint double layer with the singularity modification but no smoothing has an error of about $O(h^2)$, and the maximum errors are about 7 times as large as the $L^2$ errors. When regularization is used, the maximum errors are about twice as large as the $L^2$ errors. For the third order kernel we take ${\del = 2h}$. For a higher order kernel we can use a larger smoothing parameter, such as ${\del=3h}$. This point was discussed in further detail in~\cite{tlupova-beale-19}, sec.~4.2. Both third and fifth order kernels give smaller errors than the kernel without regularization when $h$ is sufficiently small, and the errors with the fifth order kernel are much smaller than the third order kernel. We observe the expected order of convergence as the grid is refined, $O(h^3)$ for the third order kernel and $O(h^{5q})$ for the fifth order kernel with ${\del=c h^{q}}$, where ${q<1}$. In practice, ${\del=3h}$ works well also, however for convergence for ${h\to 0}$ we take ${\del=1.5h^{4/5}}$ or a similar choice, as discussed in sec.~\ref{sec: Quad_error}. Overall, the error in the adjoint double layer significantly improves when using the singularity modification~\eqref{madl} in conjunction with the fifth order kernel~\eqref{s_order5}.

% UNIT SPHERE DIRECT EVAL NO REG + 3RD ORDER
\begin{table}[!htb]
\centering
\begin{tabular}{|c|c||c|c|c||c|c|c|}
\hline
\multirow{2}{*}{$h$} & \multirow{2}{*}{$N$}  &
\multicolumn{3}{c||}{\ \ \ no regularization\ \ \ } &
\multicolumn{3}{c|}{\ \ \ 3rd order $\del=2h$\ \ \ } \\
 & & \ \ \ $\lVert e_h\rVert_\infty$ \ \ \ & \ \ \ $\lVert e_h\rVert_2$ \ \ \ & 
Order & \ \ \ $\lVert e_h\rVert_\infty$ \ \ \ & \ \ \ $\lVert e_h\rVert_2$ \ \ \ & 
Order \\ 
\hline
1/16 & 4302 & 3.56e-03 & 5.47e-04  &  & 8.90e-03 & 3.99e-03 &  \\
\hline
1/32 & 17070 & 9.25e-04 & 1.46e-04 & 1.9  & 1.11e-03 & 5.00e-04 & 3.0 \\
\hline
1/64 & 68166 & 2.55e-04 & 3.45e-05  & 2.1 & 1.39e-04 & 6.24e-05 & 3.0 \\
\hline
1/128 & 272718 & 6.57e-05 & 8.88e-06 & 2.0 & 1.74e-05 & 7.80e-06 & 3.0 \\
\hline
\end{tabular}
\caption{Unit sphere: direct evaluation of the modified adjoint double layer potential without regularization (left) and using the third order kernel (right) with $\del=2h$.}
\label{table:ADL_eval_order3}
\end{table}

% UNIT SPHERE DIRECT EVAL 5TH ORDER
\begin{table}[!htb]
\centering
\begin{tabular}{|c||c|c|c||c|c|c|}
\hline
\multirow{2}{*}{$h$} &
\multicolumn{3}{c||}{\ \ \ 5th order $\del=3h$\ \ \ } &
\multicolumn{3}{c|}{\ \ \ 5th order $\del=1.5h^{4/5}$\ \ \ } \\
 & \ \ \ $\lVert e_h\rVert_\infty$ \ \ \ & \ \ \  $\lVert e_h\rVert_2$ \ \ \ & 
Order & \ \ \ $\lVert e_h\rVert_\infty$ \ \ \ & \ \ \ $\lVert e_h\rVert_2$ \ \ \ & 
Order \\
\hline
1/16 & 9.27e-04 & 4.10e-04 & & 5.55e-04 & 2.40e-04  &   \\
\hline
1/32 & 3.08e-05 & 1.38e-05 & 4.9 & 3.08e-05 & 1.38e-05  & 4.1 \\
\hline
1/64 & 7.64e-07 & 3.40e-07 & 5.3 & 1.53e-06 & 6.86e-07  & 4.3 \\
\hline
1/128 & 2.39e-08 & 1.07e-08 & 5.0 & 9.64e-08 & 4.33e-08 & 4.0 \\
\hline
\end{tabular}
\caption{Unit sphere: direct evaluation of the modified adjoint double layer potential using the fifth order kernel with $\del=3h$ (left) and $\del=1.5h^{4/5}$ (right).}
\label{table:ADL_eval_order5}
\end{table}

For our second test, we still use the exact solution~\eqref{test1_f},~\eqref{test1_u} and solve the boundary value problem~\eqref{naminn} for $u$. We set the Neumann condition ${\pa u/ \pa n = g_{exact}}$ and solve the integral equation~\eqref{huu} in its discrete form~\eqref{iteration_discrete} for $f$, then compute $u$ at the quadrature points as the single layer potential~\eqref{sl}, adjusting the constant as in~\eqref{adjust_constant}. We apply the fifth order regularization for the single layer potential given in~\cite{beale-ying-wilson-16}. We choose $\beta = 0.7$ in iterations~\eqref{iteration_discrete}, which are performed until ${\lVert f^{n+1} - f^n\rVert_\infty < 10^{-8}}$. Table~\ref{table:sphere_err_in_u} shows the errors in $f$ and $u$, using ${\del=3h}$. ``Error in $f$" is the error between the final iterate $f^n$ and the exact values of $f$ given by~\eqref{test1_f}. ``Error in $u$" is the error between the single layer potential computed at the quadrature points using $f^n$, and the exact values of $u$ given by~\eqref{test1_u}. We observe the expected fifth order convergence in both $f$ and $u$, and the error values are similar in magnitude to those in Table~\ref{table:ADL_eval_order5}.

% UNIT SPHERE Integral Equation: 23 iterations to tolerance 10^(-8)
\begin{table}[!htb]
\centering
\begin{tabular}{|c||c|c|c||c|c|c|}
\hline
%$\delta = 3h$
\multirow{2}{*}{$h$} & 
\multicolumn{3}{c||}{\ \ \ Error in $f$ \ \ \ } &
\multicolumn{3}{c|}{\ \ \ Error in $u$ \ \ \ } \\
 & \ \ \ $\lVert e_h\rVert_\infty$ \ \ \ & \ \ \  $\lVert e_h\rVert_2$ \ \ \ & 
Order & \ \ \ $\lVert e_h\rVert_\infty$ \ \ \ & \ \ \ $\lVert e_h\rVert_2$ \ \ \ & 
Order \\
\hline
1/16 & 2.18e-03 & 9.57e-04  &  & 9.51e-04 & 3.62e-04 &  \\
\hline
1/32 & 7.19e-05 & 3.23e-05  & 4.9 & 7.43e-05 & 1.14e-05 & 5.0 \\
\hline
1/64 & 1.78e-06 & 7.90e-07  & 5.4 & 2.04e-06 & 3.58e-07 & 5.0 \\
\hline
1/128 & 5.02e-08 & 2.25e-08 & 5.1 & 4.88e-08 & 1.13e-08 & 5.0 \\
\hline
\end{tabular}
\caption{Unit sphere: errors in the solution $f$ of the integral equation and the solution $u$ of the Neumann problem with $\del=3h$.}
\label{table:sphere_err_in_u}
\end{table}

For our third test, the surface is the ellipsoid ${x_1 ^2 + 4x_2 ^2 + 4x_3 ^2 = 1}$, and we use the harmonic function
\begin{equation}\label{test2_u}
    u(\textbf{x} )=\text{exp}(y_1 + 2y_2 )\text{cos}(\sqrt{5} y_3 ),\quad \textbf{y} = M \textbf{x},
\end{equation}
where $M$ is the same matrix as in~\eqref{test1_f}. We prescribe ${\pa u/\pa n = g}$ on the surface $S$, where $g$ is computed analytically. In this case $f$ is not known and the integral equation has to be solved. We perform the modified iterations~\eqref{iteration_discrete}, again with ${\beta=0.7}$, and until a tolerance $10^{-8}$ is reached between two consecutive iterates. We then compute the solution $u$ at the quadrature points and redefine it to adjust the constant as in equation~\eqref{adjust_constant}. Table~\ref{table:ellipsoid} shows the errors in $u$  and again the order of convergence as ${\log_2 \lVert e_{2h}\rVert_2 / \lVert e_h\rVert_2}$. To expect convergence as ${h\to 0}$, we generally set ${\del=c h^q}$ with ${q<1}$, and we tested two values of the regularization parameter, ${\del = 0.75 h^{2/3}}$ and ${\del=1.5 h^{4/5}}$ as earlier. The expected order of convergence is then ${5q}$, or 10/3 and 4 respectively, which can be seen in Table~\ref{table:ellipsoid}.

% ELLIPSOID del=3h
%\begin{table}[!htb]
%\centering
%\begin{tabular}{|c|c||c|c|c|}
%\hline
%\multirow{2}{*}{$h$} & \multirow{2}{*}{$N$} &
%\multicolumn{3}{c|}{\ \ \ Error in $u$ \ \ \ }  \\
% & & \ \ \ $\lVert e_h\rVert_\infty$ \ \ \ & \ \ \  $\lVert e_h\rVert_2$ \ \ \ & Order \\
%\hline
%1/16 & 1766 & 0.0710 & 0.0087 & \\
%\hline
%1/32 & 6958 & 0.0067 & 5.11e-04  & 4.09 \\
%\hline
%1/64 & 27934 & 3.16e-04 & 2.41e-05  & 4.41 \\
%\hline
%1/128 & 112006 & 1.30e-05 & 6.18e-07 & 5.29 \\
%\hline
%\end{tabular}
%\caption{Ellipsoid: errors in the solution $u$ of the Neumann problem with $\del = 3h$.}
%\label{table:ellipsoid_err_in_u_del=3h}
%\end{table}

% ELLIPSOID:  70 iterations to tolerance 10^(-8)
\begin{table}[!htb]
\centering
\begin{tabular}{|c|c||c|c|c||c|c|c|}
\hline
\multirow{2}{*}{$h$} & \multirow{2}{*}{$N$} &
\multicolumn{3}{c||}{\ \ \ Error in $u$ with $\del = 0.75 h^{2/3}$ \ \ \ } &  \multicolumn{3}{c|}{\ \ \ Error in $u$ with $\del = 1.5 h^{4/5}$ \ \ \ } \\
 &  & \ \ \ $\lVert e_h\rVert_\infty$ \ \ \ & \ \ \  $\lVert e_h\rVert_2$ \ \ \ & 
Order & \ \ \ $\lVert e_h\rVert_\infty$ \ \ \ & \ \ \  $\lVert e_h\rVert_2$ \ \ \ & 
Order \\
\hline
1/16 & 1766 & 3.90e-02 & 3.93e-03 & & 4.67e-02 & 5.69e-03  & \\
\hline
1/32 & 6958 & 3.24e-03 & 3.34e-04 & 3.56 & 6.66e-03 & 5.11e-04 & 3.48 \\
\hline
1/64 & 27934 & 3.16e-04 & 2.41e-05 & 3.79 & 5.25e-04 & 3.54e-05 & 3.85 \\
\hline
1/128 & 112006 & 2.67e-05 & 1.57e-06 & 3.94 & 3.37e-05 & 1.96e-06 & 4.17 \\
\hline
\end{tabular}
\caption{Ellipsoid: errors in the solution $u$ of the Neumann problem.}
\label{table:ellipsoid}
\end{table}

In our final test, the surface is a four-atom molecule~\cite{beale-ying-wilson-16}, given by ${\sum_{k=1}^4 \exp(- |{\bf x} - {\bf x}_k|^2/r^2) = c}$, with centers ${(\sqrt{3}/3,0,-\sqrt{6}/12)}$, ${(-\sqrt{3}/6,\pm .5,-\sqrt{6}/12)}$, ${(0,0,\sqrt{6}/4)}$ and ${r = 0.5}$, ${c = 0.6}$. The solution is taken as the harmonic function in~\eqref{test2_u}, and we repeat the above tests. The errors shown in Table~\ref{table:molecule} are similar to those for the ellipsoid, with the expected $O(h^4)$ again observed when using ${\del = ch^{4/5}}$.

% MOLECULE:  70 iterations to tolerance 10^(-8)
\begin{table}[!htb]
\centering
\begin{tabular}{|c|c||c|c|c||c|c|c|}
\hline
\multirow{2}{*}{$h$} & \multirow{2}{*}{$N$} &
\multicolumn{3}{c||}{\ \ \ Error in $u$ with $\del = 0.75 h^{2/3}$ \ \ \ } &  \multicolumn{3}{c|}{\ \ \ Error in $u$ with $\del = 1.5 h^{4/5}$ \ \ \ } \\
 &  & \ \ \ $\lVert e_h\rVert_\infty$ \ \ \ & \ \ \  $\lVert e_h\rVert_2$ \ \ \ & 
Order & \ \ \ $\lVert e_h\rVert_\infty$ \ \ \ & \ \ \  $\lVert e_h\rVert_2$ \ \ \ & 
Order \\
\hline
1/16 & 2392 & 3.20e-02 & 3.50e-03 & & 2.40e-02 & 4.39e-03  & \\
\hline
1/32 & 9562 & 3.13e-03 & 2.84e-04 & 3.62 & 2.66e-03 & 3.64e-04 & 3.59 \\
\hline
1/64 & 38354 & 2.59e-04 & 1.44e-05 & 4.30 & 1.91e-04 & 2.19e-05 & 4.05 \\
\hline
1/128 & 153399 & 7.83e-06 & 1.04e-06 & 3.79 & 8.80e-06 & 1.30e-06 & 4.07 \\
\hline
\end{tabular}
\caption{Molecular surface: errors in the solution $u$ of the Neumann problem.}
\label{table:molecule}
\end{table}

%%%%%%%%%%%%%%%%%%%%%%%%%%%%%%%%%%%%%%%%%
%%%%%%%%%%%%%%%%%%%%%%%%%%%%%%%%%%%%%%%%%
%\clearpage

\section{Appendix}
To take a closer look at the principal terms in the expansion of the modified adjoint double layer~\eqref{madl}, we consider additional terms in~\eqref{xtaylor2},~\eqref{ntaylor2}, and~\eqref{ftaylor},
\begin{eqnarray}\label{xte}
     \textbf{x}(\alpha )= \textbf{T}_1 (0)\alpha_1 &+&\textbf{T}_2 (0)\alpha_2 +\frac{1}{2} \kappa _1 \textbf{n}_0 \alpha _1 ^2 +\frac{1}{2} \kappa _2 \textbf{n}_0 \alpha _2 ^2 + \frac{1}{6}\textbf{x}_{111}\alpha _{1}^3 + \frac{1}{2}\textbf{x}_{112}\alpha _{1}^2 \alpha _{2} \nonumber \\ &+& \frac{1}{2}\textbf{x}_{122}\alpha _{1} \alpha _{2}^2 + \frac{1}{6}\textbf{x}_{222}\alpha _{2}^3 + O(|\alpha |^4 ),
\end{eqnarray}
\begin{equation}\label{nte}
    \textbf{n}(\alpha )=\textbf{n}_0-\kappa _1 \textbf{T}_1 (0)\alpha _1 -\kappa _2 \textbf{T}_2 (0)\alpha _2 + \frac{1}{2}\textbf{n}_{11}\alpha _{11}^2 + \textbf{n}_{12}\alpha _{1}\alpha _{2} + \frac{1}{2}\textbf{n}_{22}\alpha _{22}^2 +O(|\alpha |^3 ),
\end{equation}
\begin{equation}\label{fte}
     f(\alpha )=f_0 +f_1 \alpha _1 +f_2 \alpha _2 + \frac{1}{2}f_{11} \alpha _1 ^2 + f_{12}\alpha _1 \alpha _2 +\frac{1}{2}f_{22} \alpha _2 ^2 +O(|\alpha |^3).
\end{equation}
Then, we have for the numerator in~\eqref{madl},
\begin{eqnarray}\label{heck}
    (f(\alpha )\textbf{n}_0 +f_0 \textbf{n}(\alpha ))\cdot \textbf{x}( \alpha )&=&
      \frac{1}{2}(\kappa _1 \alpha _1 ^2 +\kappa _2 \alpha _2 ^2 )(f_1 \alpha _1 +f_2 \alpha _2 ) \nonumber \\
    &+& \frac{1}{2} \textbf{n}_{11}\cdot \textbf{T}_1 f_0 \alpha _1 ^3 + \textbf{n}_{12}\cdot \textbf{T}_1 f_0 \alpha _1 ^2 \alpha _2 + \frac{1}{2}\textbf{n}_{22}\cdot \textbf{T}_1 f_0 \alpha _1 \alpha _2 ^2 \nonumber \\
    &+&  \frac{1}{2}\textbf{n}_{11}\cdot \textbf{T}_2 f_0 \alpha _1 ^2 \alpha _2 + \textbf{n}_{12}\cdot \textbf{T}_2 f_0 \alpha _1 \alpha _2 ^2+ \frac{1}{2}\textbf{n}_{22}\cdot \textbf{T}_2 f_0 \alpha _2 ^3  \nonumber\\
    &+&  \frac{1}{3} \textbf{n}_0 \cdot \textbf{x}_{111} f_0 \alpha _1 ^3 +\textbf{n}_0 \cdot \textbf{x}_{112} f_0 \alpha _1 ^2 \alpha _2  \nonumber\\
    &+& \textbf{n}_0 \cdot \textbf{x}_{122} f_0 \alpha _1 \alpha _2 ^2 +\frac{1}{3} \textbf{n}_0 \cdot \textbf{x}_{222} f_0 \alpha _2 ^3 + O(|\alpha |^4 ). 
\end{eqnarray}
To simplify~\eqref{heck}, we investigate the terms such as ${\textbf{n}_{11}\cdot \textbf{T} _1}$, ${\textbf{n}_{12}\cdot \textbf{T} _1}$, etc. We start with the definition of the normal vector stated in~\eqref{2nn}. Differentiating~\eqref{2nn} twice in $\alpha _1$, we get
\begin{equation}\label{n11}
    \textbf{n}_{11} =\frac{\partial ^2}{\partial \alpha _1 ^2}(\textbf{T}_1 \times \textbf{T}_2 ) \frac{1}{\sqrt{g}} +2 \frac{\partial}{\partial \alpha _1} (\textbf{T}_1 \times \textbf{T}_2 ) \cdot \frac{\partial}{\partial \alpha _1} \left( \frac{1}{\sqrt{g}} \right) + (\textbf{T}_1 \times \textbf{T}_2 ) \frac{\partial ^2}{\partial \alpha _1 ^2} \left( \frac{1}{\sqrt{g}} \right) .
\end{equation}
Evaluating~\eqref{n11} at ${\alpha = 0}$, the second term is equal to zero since Christoffel symbols are zero and the third term will vanish when taking the dot product with $\textbf{T}_1$ and  $\textbf{T}_2$ since it is perpendicular to them. We are left with only the first term which we expand,
\begin{equation}\label{n11_term1}
    \frac{\partial ^2}{\partial \alpha _1 ^2} (\textbf{T}_1 \times \textbf{T}_2 ) = \frac{\partial ^2 \textbf{T}_1 }{\partial \alpha _1 ^2} \times \textbf{T}_2 + 2  \frac{\partial \textbf{T}_1 }{\partial \alpha _1} \times \frac{\partial \textbf{T}_2 }{\partial \alpha _1} + \textbf{T}_1 \times \frac{\partial ^2 \textbf{T}_2 }{\partial \alpha _1 ^2}.
\end{equation}
The second term in~\eqref{n11_term1} is zero at ${\alpha =0}$, and we compute the dot products of the remaining terms with $\textbf{T}_1$ as needed in \eqref{heck},
\begin{equation}
    \left( \frac{\partial ^2 \textbf{T}_1 }{\partial \alpha _1 ^2} \times \textbf{T}_2 \right) \cdot \textbf{T}_1 = \frac{\partial ^2 \textbf{T}_1 }{\partial \alpha _1 ^2} \cdot (\textbf{T}_2 \times \textbf{T}_1) = -\textbf{x}_{111}\cdot \textbf{n}_0
\end{equation}
and 
\begin{equation}
    \left( \textbf{T}_1 \times \frac{\partial ^2 \textbf{T}_2 }{\partial \alpha _1 ^2} \right) \cdot \textbf{T}_1 = \frac{\partial ^2 \textbf{T}_2 }{\partial \alpha _1 ^2} \cdot (\textbf{T}_1 \times  \textbf{T}_1) = 0.
\end{equation}
Finally we obtain at ${\alpha =0}$, $\textbf{n}_{11} \cdot \textbf{T}_1 =-\textbf{x}_{111}\cdot \textbf{n}_0$. By this method we find that generally $\textbf{n}_{ij} \cdot \textbf{T}_k =-\textbf{x}_{ijk}\cdot \textbf{n}_0,\hspace{.2cm} i,j,k=1,2$. Using this, we combine terms in~\eqref{heck} and find the third order terms in~\eqref{ab},
\begin{eqnarray}
    (f(\alpha )\textbf{n}_0 +f_0 \textbf{n}(\alpha ))\cdot \textbf{x}( \alpha )&=&
     \frac{1}{2}(\kappa _1 \alpha _1 ^2 +\kappa _2 \alpha _2 ^2 )(f_1 \alpha _1 +f_2 \alpha _2 ) \nonumber\\
     &-& \frac{1}{6} \textbf{x}_{111} \cdot \textbf{n}_0 f_0 \alpha _1 ^3 - \frac{1}{2} \textbf{x}_{112} \cdot \textbf{n}_0 f_0 \alpha _1 ^2 \alpha _2 \nonumber \\ &-& \frac{1}{2} \textbf{x}_{122} \cdot \textbf{n}_0 f_0 \alpha _1 \alpha _2 ^2 - \frac{1}{6} \textbf{x}_{222} \cdot \textbf{n}_0 f_0 \alpha _2 ^3 +O(|\alpha |^4 ).
\end{eqnarray}

%%%%%%%%%%%%%%%%%%%%%%%%%%%%%%%%%%%%%%%%%

\section*{Acknowledgments}

The work of MS and ST was supported in part by the National Science Foundation grant DMS-2012371.

%%%%%%%%%%%%%%%%%%%%%%%%%%%%%%%%%%%%%%%%%

\section*{Declarations}

\subsection*{Conflict of interest}

The authors declare no competing interests.

%%%%%%%%%%%%%%%%%%%%%%%%%%%%%%%%%%%%%%%%%

\bibliographystyle{plain}
\bibliography{Bib}

\end{document}